\DeclareFontFamily{U}{mathx}{\hyphenchar\font45}
\DeclareFontShape{U}{mathx}{m}{n}{<-> mathx10}{}
\DeclareSymbolFont{mathx}{U}{mathx}{m}{n}
\DeclareMathAccent{\widebar}{0}{mathx}{"73}
\author{P. Pollack and J. Vandehey}
\thanks{Email: \href{mailto:vandehey@uga.edu}{\nolinkurl{vandehey@uga.edu}}}
\title{Besicovitch, bisection, and the normality of $0.(1)(4)(9)(16)(25)\dots$}
\date{\today}
\subjclass[2010]{Primary: 11K16}
\newtheorem{thm}{Theorem}
\newtheorem*{claim}{Claim}
\newtheorem{prop}[thm]{Proposition}
\newtheorem{lem}[thm]{Lemma}
\theoremstyle{definition}
\theoremstyle{remark}
\DeclareMathAlphabet{\curly}{U}{rsfs}{m}{n}
\newcommand{\s}{\mathbf{s}}
\begin{document}

\maketitle

\allowdisplaybreaks

\begin{abstract}
We revisit Besicovitch's 1935 paper in which he introduced several techniques that have become essential elements of modern combinatorial methods of normality proofs. Despite his paper's influence, the results he inspired are not strong enough to reprove his original result. We provide a new proof of the normality of the constant $0.(1)(4)(9)(16)(25)\dots$ formed by concatenating the squares, updating Besicovitch's methods.
\end{abstract}

%%%%%%%%%%%%
\section{Introduction}\label{section:introduction}
%%%%%%%%%%%%%%%%%

A real number $x$ is said to be normal (to base $10$) if every string of decimal digits appears in the decimal expansion of $x$ as frequently as every other string of the same length, so the digit $4$ should appear as often as $9$, and $299$ should appear as often as $058$. More concretely, given a fixed integer base $g \ge 2$, let $\nu(x,N,\s)$ denote the number of times the string $\s$, consisting of $k$ base $g$ digits, appears in the first $N$ digits of the base $g$ expansion for $x$: then $x$ is normal to base $g$ if for every non-empty string $\s$, we have
\begin{equation}\label{eq:mainlimit}
\lim_{N\to \infty} \frac{\nu(x,N,\s)}{N} = \frac{1}{g^k}.
\end{equation}

Borel showed that almost all real numbers are normal to a given base $g$, in the sense that the set of numbers that are not normal has Lebesgue measure $0$. Despite this, to this day, no well-known mathematical constant, such as $e$, $\pi$, or $\ln 2$, is known to be normal to any integer base. All known examples of normal numbers were numbers constructed to be normal.

The first such explicit construction was given by Champernowne \cite{champernowne33}: he showed that if we concatenate all the integers in succession---like so, $ 0.123456789101112\cdots$---then the resulting number is normal to base $10$. Champernowne's result inspired many mathematicians to look at sequences of positive integers $\{a_n\}_{n=1}^\infty$ which make the number $0.\widebar{a_1}\widebar{a_2}\widebar{a_3}\widebar{a_4}\cdots$ normal in a given base. Here, given an integer $a$, we shall let $\widebar{a}$ denote the string composed of its base $g$ digits.

Shortly after Champernowne, Besicovitch studied the sequence $a_n=n^2$. Although it is commonly stated that Besicovitch proved that the number $x_B=0.14916253649\cdots$ is normal to base $10$, in fact he showed a different result from which the normality of $x_B$ can be derived relatively quickly. Nonetheless, Besicovitch's work was important for two main reasons.

First, Besicovitch's result inspired Davenport and Erd\H{o}s \cite{DE52} to develop a method of proving normality through exponential sum estimates, and they used this method to show that if $f(n)$ is a positive, non-constant, integer-valued polynomial, then $0.\widebar{f(1)}\widebar{f(2)}\widebar{f(3)}\dots$ is a normal number to the given base. (Taking $f(n)=n^2$, this gives a second proof that $x_B$ is normal to base $10$.) This has given rise to what we call the \emph{analytic method of normality proofs}, which has resulted in a number of different normality results by applying different results on exponential sums (see, for example, \cite{MTT08,NS97,vandehey13}).

Second, Besicovitch's result inspired the definition of an integer being $(\epsilon,k)$-normal. Let $\nu(a,\s)$ denote the number of times the finite string $\s$ appears in $\widebar{a}$, and let $L(a)$ denote the number of digits in the string $\widebar{a}$. Then an integer $n$ is said to be $(\epsilon,k)$-normal if 
\[
\left| \frac{\nu(a,\s)}{L(a)} - \frac{1}{g^k}\right| \le \epsilon
\]for every string $\s$ with $k$ digits. This definition inspired the following result, which may be called \emph{the combinatorial method of normality proofs}.

\begin{thm}\label{thm:combinatorial}Consider a sequence $\{a_n\}_{n=1}^\infty$. 
Suppose that the lengths of the strings $\widebar{a_n}$ are growing on average, but that no one length dominates; more precisely, suppose that as $m$ tends to infinity, we have 
\begin{equation}\label{eq:lengths}
m = o\left( \sum_{n=1}^m L(a_n) \right) \qquad \text{ and } \qquad m\cdot \max_{1\le n \le m} L(a_n) =O\left( \sum_{n=1}^m L(a_n)\right) .
\end{equation} In addition, suppose that for any fixed $\epsilon>0$ and $k\in \mathbb{N}$, almost all $a_n$ are $(\epsilon,k)$-normal, in the sense that the number of $n\le m$ for which $a_n$ is \emph{not} $(\epsilon,k)$-normal is $o(m)$ as $m$ tends to infinity again.

Then $x=0.\widebar{a}_1\widebar{a}_2\widebar{a}_3\widebar{a}_4\dots$ is normal.
\end{thm}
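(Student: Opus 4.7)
My plan is to reduce the limit in \eqref{eq:mainlimit} to a counting problem over the blocks $\widebar{a_n}$ and handle the transition to general $N$ at the end. Fix a string $\s$ of length $k$ and set $N_m := \sum_{n=1}^m L(a_n)$. First I would evaluate $\nu(x,N_m,\s)$ by separating occurrences of $\s$ into \emph{interior} ones (those lying entirely in a single block $\widebar{a_n}$ for some $n\le m$) and \emph{boundary} ones (those that straddle the junction between two consecutive blocks). Each junction supports at most $k-1$ boundary occurrences, so these contribute at most $(k-1)m$ in total, which is $o(N_m)$ by the first hypothesis in \eqref{eq:lengths}.

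The main step is to control the interior occurrences. Fix $\epsilon>0$, and let $S_m\subseteq\{1,\dots,m\}$ be the set of $n\le m$ for which $a_n$ is \emph{not} $(\epsilon,k)$-normal; by assumption $|S_m|=o(m)$. For $n\notin S_m$ the definition of $(\epsilon,k)$-normality gives $\nu(a_n,\s)= g^{-k}L(a_n)+O(\epsilon L(a_n))$, while for $n\in S_m$ I would use only the trivial bound $\nu(a_n,\s)\le L(a_n)\le \max_{1\le n\le m}L(a_n)$. The second condition in \eqref{eq:lengths} says $\max_{1\le n\le m}L(a_n)=O(N_m/m)$, so
\[
\sum_{n\in S_m} L(a_n) \le |S_m|\cdot\max_{1\le n\le m}L(a_n)=o(m)\cdot O(N_m/m)=o(N_m).
\]
Summing the two contributions and combining with the boundary estimate yields
\[
\nu(x,N_m,\s) = g^{-k}N_m + O(\epsilon N_m) + o(N_m),
\]
so $\nu(x,N_m,\s)/N_m\to g^{-k}$ after letting $m\to\infty$ and then $\epsilon\to 0$.

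The last step is to pass from the subsequence $\{N_m\}$ to arbitrary $N$. Given $N$, choose $m$ with $N_{m-1}\le N< N_m$. The counts $\nu(x,N,\s)$ and $\nu(x,N_m,\s)$ differ by at most $L(a_m)\le \max_{1\le n\le m}L(a_n)=O(N_m/m)=o(N_m)$, and the same bound gives $N_m/N_{m-1}\to 1$, so $N/N_m\to 1$ as well; thus $\nu(x,N,\s)/N$ has the same limit as $\nu(x,N_m,\s)/N_m$.

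I expect the arithmetic of the splitting to be routine once the two hypotheses in \eqref{eq:lengths} are used in the right places; the one spot that takes genuine care is the non-normal bookkeeping, where the bound $|S_m|\cdot\max_{n\le m}L(a_n)=o(N_m)$ requires \emph{both} the ``no dominating length'' estimate and the qualitative assumption that exceptional blocks are rare. This interplay is the only real obstacle, and it is exactly where the second condition in \eqref{eq:lengths} is used in an essential way.
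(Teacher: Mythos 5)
Your proposal is correct and follows essentially the same route as the paper: bound the straddling occurrences by $O(km)=o(N)$, control the exceptional blocks via $|S_m|\cdot\max_{n\le m}L(a_n)=o(N_m)$ using both hypotheses in \eqref{eq:lengths}, and extract the main term $g^{-k}N_m$ from the $(\epsilon,k)$-normal blocks before letting $\epsilon\to 0$. The only (cosmetic) difference is that you prove the limit along the subsequence $N_m$ and then interpolate to general $N$, whereas the paper works with general $N$ directly via the index $m(N)$ of the block containing the $N$th digit.
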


Here, we use the notation $f(x)=O(g(x))$ to mean that $|f(x)/g(x)|\le C $ for some constant $C$ (called an \emph{implicit constant}), and the notation $f(x)=o(g(x))$ to mean that $f(x)/g(x)$ approaches $0$ as $x$ approaches infinity. We shall also use the notation $f(x) \sim g(x)$ to mean $f(x) = g(x)(1+o(1))$ or, equivalently, $\lim_{x\to \infty} f(x)/g(x)=1$. 

Theorem \ref{thm:combinatorial} says, in essence, that if almost all $a_n$ exhibit small-scale normality results, then we expect the full number $x$ to exhibit large-scale normality results. Although Theorem \ref{thm:combinatorial} is implicitly used in almost every combinatorial normality proof, we are unaware of it ever being given explicitly in the literature, and so provide a proof in Section \ref{sec:combproof}.

Copeland and Erd\H{o}s \cite{CE46} gave a fairly powerful counting result on the number of integers that are \emph{not} $(\epsilon,k)$-normal. The first half of the following proposition is due to them; the second half is derived from Lemma $4.7$ in \cite{Bugeaud}.

\begin{prop}\label{prop:CE}
Let $\epsilon>0$ and $k\in \mathbb{N}$ be fixed. 

There exists a $\delta=\delta(\epsilon,k)>0$ such that the number of integers in the interval $[1,m]$ that are not $(\epsilon,k)$-normal is at most $m^{1-\delta}$ for all sufficiently large $m$.

There also exists a $\delta'=\delta'(\epsilon,k)>0$ such that the number of base-$g$ strings of length $\ell$ (including those that start with $0$) that are not $(\epsilon,k)$-normal is at most $g^{\ell(1-\delta)}$ for all sufficiently large $\ell$.
\end{prop}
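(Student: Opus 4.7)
The plan is to prove the second (string-counting) statement first via a large-deviations argument, then derive the first (integer-counting) statement as a corollary. The essential probabilistic intuition is that a uniformly random length-$\ell$ base-$g$ string is, with overwhelming probability, $(\epsilon,k)$-normal, with failure probability decaying exponentially in $\ell$. The technical nuisance will be that the indicators attached to adjacent length-$k$ windows are not independent, so a Chernoff-type bound cannot be applied to $\nu(\mathbf{a},\s)$ directly; this is the main obstacle, and it will be handled by a residue-class splitting described below.

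For the second statement, I would fix $\epsilon>0$, $k\in\N$, and a $k$-digit target $\s$. The count $\nu(\mathbf{a},\s)$ on a length-$\ell$ string $\mathbf{a}=a_1\cdots a_\ell$ is the sum over $i\in\{1,\dots,\ell-k+1\}$ of the indicator that the window $a_i\cdots a_{i+k-1}$ equals $\s$. I would partition the starting indices $i$ into the $k$ residue classes modulo $k$; within each class the windows are disjoint, so under the uniform measure on length-$\ell$ strings the corresponding indicators are genuinely i.i.d.\ Bernoulli$(g^{-k})$. Hoeffding's inequality then bounds the probability that the partial sum along any one class deviates from its mean by more than $\epsilon\ell/(2k)$ by $\exp(-c(\epsilon,k)\ell)$. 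A triangle inequality assembles these partial-sum deviations into a deviation of the total $\nu(\mathbf{a},\s)$, and a union bound over the $k$ classes and the $g^k$ choices of $\s$ shows that the proportion of non-$(\epsilon,k)$-normal strings is at most $2kg^k\exp(-c\ell)$, giving an absolute count of at most $g^\ell\exp(-c'(\epsilon,k)\ell)$ for large $\ell$. Setting $\delta'=c'/\log g$ produces the bound $g^{\ell(1-\delta')}$.

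For the first statement, let $d=\lfloor\log_g m\rfloor+1$, so every integer in $[1,m]$ has between $1$ and $d$ base-$g$ digits. Integers of length exactly $\ell$ sit inside the set of length-$\ell$ strings, so the second statement bounds the number of non-$(\epsilon,k)$-normal such integers by $g^{\ell(1-\delta')}$ once $\ell$ exceeds the threshold where the Hoeffding bound becomes effective; the contribution from smaller $\ell$ is $O(1)$. Summing the resulting geometric series over $\ell\le d$ gives a total $\ll g^{d(1-\delta')}\ll m^{1-\delta'}$, and choosing $\delta$ slightly smaller than $\delta'$ absorbs the implicit constant to deliver the bound $m^{1-\delta}$. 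Beyond the residue-class trick, the remaining ingredients — the union bounds, the passage from strings to integers, and the geometric summation — are routine bookkeeping.
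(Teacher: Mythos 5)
Your argument is correct, but a comparison with ``the paper's proof'' is somewhat moot: the paper does not prove Proposition \ref{prop:CE} at all, attributing the first half to Copeland and Erd\H{o}s and the second to Lemma 4.7 of Bugeaud. What you have written is a valid self-contained proof in the same spirit as those sources, which establish the exponential rarity of non-$(\epsilon,k)$-normal strings by explicit binomial/multinomial (Stirling-type) estimates rather than by invoking Hoeffding's inequality by name; your residue-class-mod-$k$ splitting is precisely the classical device for reducing overlapping window counts to sums of independent indicators, so the probabilistic repackaging costs nothing and arguably reads more cleanly. Two small points you should make explicit when writing this up. First, the paper normalizes $\nu(a,\s)$ by $L(a)=\ell$ while the number of windows is $\ell-k+1$, so the mean of $\nu(\mathbf{a},\s)/\ell$ is $g^{-k}(1-(k-1)/\ell)$ rather than $g^{-k}$; this $O(k/\ell)$ discrepancy must be absorbed into your $\epsilon/2$ budget, which is trivial for $\ell$ large but should be said. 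Second, in passing from strings to integers, note that the integers in $[1,m]$ with exactly $\ell$ digits inject into the set of \emph{all} length-$\ell$ strings (the proposition deliberately counts strings with leading zeros), so the string bound applies a fortiori, and the dominant term of your geometric series comes from $\ell=\lfloor \log_g m\rfloor+1$, giving $g^{\ell(1-\delta')}\le g^{1-\delta'}m^{1-\delta'}$; shrinking $\delta'$ to $\delta$ absorbs the constant exactly as you say. With those details filled in, the proof is complete.
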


Combining Theorem \ref{thm:combinatorial}, the first half of Proposition \ref{prop:CE}, and the Prime Number Theorem, it is immediate that the Copeland-Erd\H{o}s number $0.2357111317\dots$, formed by taking $a_n$ to be the $n$th prime, is normal to base $10$.

Many early normality results focused on sequences $\{a_n\}_{n=1}^\infty$ that were increasing. More recent variants have allowed more chaotic and oscillating functions $a_n=f(n)$ to be considered. Recently, the authors of this note looked at functions $f(n)$ that are \emph{almost bijective}, which we shall define in the following way.

First, we say a set $S\subset \mathbb{N}$ is \emph{meager} if $\#\{n\in S: n \le m \} \le m^{1-\delta}$ for some fixed $\delta>0$ and all sufficiently large $m$. We say a set $S\subset \mathbb{N}$ has \emph{asymptotic density $0$} if $\#\{n\in S: n \le m \} =o(m)$. We say a function $f\colon\mathbb{N}\to \mathbb{N}$ is \emph{almost bijective} if the pre-image of any meager set has asymptotic density $0$. 

By Proposition \ref{prop:CE}, the set of integers which are not $(\epsilon,k)$-normal is a meager set. Thus, if $f\colon\mathbb{N}\to \mathbb{N}$ is almost bijective, then $f(n)$ will be $(\epsilon,k)$-normal for almost all $n$. This gives the following variant on the combinatorial method, which appears explicitly in \cite{PV}:

\begin{thm}
Suppose the function $f\colon\mathbb{N}\to\mathbb{N}$ is almost bijective. 
If, in addition,
\[
m = o\left( \sum_{n=1}^m L(a_n) \right) \qquad \text{ and } \qquad m\cdot \max_{1\le n \le m} L(a_n) =O\left( \sum_{n=1}^m L(a_n)\right) 
\]
then $x=0.\widebar{f(1)}\widebar{f(2)}\widebar{f(3)}\dots$ is normal.
\end{thm}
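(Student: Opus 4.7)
The plan is to reduce the claim to Theorem \ref{thm:combinatorial} by taking $a_n := f(n)$. The two length conditions in \eqref{eq:lengths} are assumed outright in the statement, so the only point that requires verification is the remaining hypothesis of Theorem \ref{thm:combinatorial}: that for each fixed $\epsilon>0$ and $k\in\mathbb{N}$, the number of $n\le m$ for which $f(n)$ is not $(\epsilon,k)$-normal is $o(m)$.

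To check this, fix $\epsilon>0$ and $k\in\mathbb{N}$, and let $S_{\epsilon,k}\subset\mathbb{N}$ denote the set of integers that fail to be $(\epsilon,k)$-normal. The first half of Proposition \ref{prop:CE} gives $\#\{a\in S_{\epsilon,k} : a\le m\}\le m^{1-\delta}$ for some $\delta=\delta(\epsilon,k)>0$ and all sufficiently large $m$; in other words, $S_{\epsilon,k}$ is meager in the sense defined in the introduction. Since $f$ is almost bijective, the pre-image $f^{-1}(S_{\epsilon,k})$ has asymptotic density zero, so
\[
\#\{n\le m : f(n)\text{ is not }(\epsilon,k)\text{-normal}\} \;=\; \#\{n\le m : n\in f^{-1}(S_{\epsilon,k})\} \;=\; o(m),
\]
which is exactly what Theorem \ref{thm:combinatorial} demands.

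Applying Theorem \ref{thm:combinatorial} to the sequence $a_n = f(n)$ now yields that $x = 0.\widebar{f(1)}\widebar{f(2)}\widebar{f(3)}\dots$ is normal, completing the argument. I do not expect any genuine obstacle in this deduction: the statement is essentially a direct chaining of Proposition \ref{prop:CE} with Theorem \ref{thm:combinatorial}, and the almost bijective hypothesis is tailored precisely to convert the conclusion ``meager set of bad integers'' into the conclusion ``density-zero set of bad indices.'' Whatever real difficulty the theorem inherits lies in its two inputs --- the Copeland--Erd\H{o}s counting estimate behind Proposition \ref{prop:CE}, and the proof of Theorem \ref{thm:combinatorial} itself, deferred to Section \ref{sec:combproof} --- rather than in the present deduction.
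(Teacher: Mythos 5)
Your proposal is correct and matches the paper's own (implicit) argument exactly: the paper derives this theorem precisely by noting that Proposition \ref{prop:CE} makes the set of non-$(\epsilon,k)$-normal integers meager, so almost bijectivity gives a density-zero set of bad indices, and Theorem \ref{thm:combinatorial} then applies with $a_n=f(n)$. Nothing further is needed.
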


This result covers a fairly wide variety of functions. De Koninck and K\'{a}tai \cite{DKK11,DKK13} implicitly applied this result with certain variants of the largest prime divisor function. Pollack and Vandehey \cite{PV} showed that one could take $f(n)$ to be various classical number-theoretic functions, including the Euler totient function and the sum-of-divisors function. Sz\"{u}sz and Volkmann \cite{SV} gave fairly general analytic conditions guaranteeing that the values $f(n)$ are $(\epsilon,k)$-normal for almost all $n$. (See the following table for some explicit examples.)

\begin{table}[h]
\begin{centering}

\begin{tabular}{| p{4cm} | p{4cm} | r|}
\hline
Function & Discoverer & Resulting normal number\\
\hline \hline
$P(n)$, the largest prime divisor of $n$ & De Koninck and K\'{a}tai & $0.123253723511213\dots$\\
\hline
$\phi(n)$, the Euler totient function & Pollack and Vandehey & $0.112242646410412\dots$\\
\hline
$\sigma(n)$, the sum of the divisors of $n$ & Pollack and Vandehey & $0.1347612815131812\dots$\\
\hline
$\lfloor n^{1/2} \rfloor$, the floor of the square root of $n$ & Sz\"{u}sz and Volkmann & $0.1112222233333334\dots$\\
\hline
\end{tabular}
\end{centering}
\end{table}

Despite all these results inspired by Besicovitch's work, none of them are strong enough to prove the normality of $x_B=0.14916253649\dots$.\footnote{Szusz and Volkmann mistakenly claim that their result is strong enough to prove a result like this. In Theorem 2 of their paper, they need an additional condition that $\beta \le 1$, because if $\beta>1$ then the bound in line (3.11) would be $M_k=O(1)$, which would cause their condition (v) to fail.}  In particular, for the function $f(n)=n^2$, the pre-image of the meager set $S=\{n^2:n \in \mathbb{N}\}$ is the whole domain $\mathbb{N}$. 

Our goal in the rest of this paper is to update and simplify the proof of the normality of $x_B=0.\widebar{f(1)}\widebar{f(2)}\dots$ with $f(n)=n^2$ and some integer base $g\ge 2$, to reflect modern work on normal numbers, in the hope that it can inspire further combinatorial results. In Section \ref{sec:combproof}, we prove Theorem \ref{thm:combinatorial}. In Sections \ref{sec:half}--\ref{sec:secondhalf}, we present our proof of the normality of $x_B$. In Section \ref{sec:final}, we briefly discuss how our proof differs from Besicovitch's and the difficulties in extending this method combinatorially.

%%%%%%%%%%%%%%%%%
\section{Proof of Theorem \ref{thm:combinatorial}}\label{sec:combproof}
%%%%%%%%%%%%%%%%%%

Consider a sequence $\{a_n\}_{n=1}^\infty$ satisfying the conditions of Theorem \ref{thm:combinatorial}. Let $x=0.\widebar{a_1}\widebar{a_2}\widebar{a_3}\dots$ in the appropriate base $g$. To show that $x$ is normal to base $g$, we must show for any given string $\s$ of length $k$ that 
\[
\lim_{N\to \infty} \frac{\nu(x,N,\s)}{N} = \frac{1}{g^k}.
\]
We fix an $\epsilon>0$, which will be allowed to tend towards zero at the end of the proof.

For a given integer $N$, let $m=m(N)$ be such that the $N$th digit of $x$ lies in the string given by $\widebar{a_m}$. Then
\[
\sum_{n=1}^{m-1} L(a_n) < N \le \sum_{n=1}^m L(a_n).
\]
The second part of \eqref{eq:lengths} implies that $L(a_m) = o(\sum_{n=1}^m L(a_n))$, so we have that $N\sim \sum_{n=1}^m L(a_n)$, $m=o(N)$, and $L(a_m)=o(N)$. Therefore,
\[
\nu(x,N,\s) = \nu(\widebar{a_1}\widebar{a_2}\dots \widebar{a_m},\s) + O(L(f(m)) )=  \nu(\widebar{a_1}\widebar{a_2}\dots \widebar{a_m},\s) + o(N).
\]

The number of times a string of length $k$ can appear in $\widebar{a_1}\widebar{a_2}\dots \widebar{a_m}$ starting in some $\widebar{a_n}$ and ending in some $\widebar{a_{n'}}$ with $n< n'$ is at most $km=o(N)$. Therefore, 
\[
\nu(x,N,\s) = \nu(\widebar{a_1}\widebar{a_2}\dots \widebar{a_m},\s) + o(N) = \sum_{n\le m} \nu(\widebar{a_n},\s) +o(N).
\]

Let $T\subset\mathbb{N}$ be the set of integers $n$ such that $a_n$ is \emph{not} $(\epsilon,k)$-normal. Note that by the assumptions of the theorem, we have $\#\{n\le m: n \in T\} = o(m)$. 
We always have that $\nu(\widebar{a_n},\s)=O(L(a_n))$, and therefore
\begin{align*}
\sum_{\substack{n\le m \\ n \in T}} \nu(\widebar{a_n},\s) &= O\left( \sum_{\substack{n\le m \\ n \in T}} L(a_n) \right) = O\left( \max_{n\le m} L(a_n) \cdot \sum_{\substack{n\le m \\ n \in T}} 1  \right)\\
&= o \left( m \cdot \max_{n\le m} L(a_n) \right) = o\left( N\right).
\end{align*}

Now we let $S=\mathbb{N}\setminus T$ be the set of integers $n$ such that $a_n$ is $(\epsilon,k)$-normal. If $n\in S$ then $\nu(\widebar{a_n},\s) = L(a_n)g^{-k}+O(\epsilon L(a_n))$, and thus
\begin{align*}
\sum_{\substack{n\le m\\ n\in S }} \nu(\widebar{a_n},\s) &= \sum_{\substack{n\le m \\ n\in S}} \left( L(a_n) \left(g^{-k}  +O(\epsilon)\right)\right)\\
&= g^{-k}\left( \sum_{n \le m} L(a_n) - \sum_{\substack{n \le m \\n \in T}} L(a_n) \right) +O\left( \epsilon \cdot  \sum_{\substack{n\le m \\ n\in S}} L(a_n)\right) \\
&= g^{-k} \left( N(1+o(1)) + o(N)\right) +O\left( \epsilon \cdot  \sum_{n\le m } L(a_n)\right)\\
&= g^{-k}N +o(N)+O(\epsilon N)
\end{align*} 

Since the sum over $n\le m$ is equal to the sum over $n\le m$ with $n\in S$ plus the sum over $n\le m$ with $n \in T$, we have shown that
\[
\frac{\nu(x,N,\s)}{N} = g^{-k}+o(1)+O(\epsilon).
\]
Since $\epsilon>0$ was arbitrary, $\nu(x,N,\s)/N\to g^{-k}$ as $N\to \infty$.

%%%%%%%%%%%%%%%%%%%%
\section{Cutting the squares in half}\label{sec:half}
%%%%%%%%%%%%%%%%%%%%

From here on, we shall be interested in the specific case when $a_n=f(n)=n^2$ with a fixed integer base $g\ge 2$. Implicit constants may depend on $g$.

In this case we have $L(f(n))=\lfloor \log_{g} f(n) \rfloor+1=2\log_{g} n +O(1)$, and thus 
\begin{equation}\label{eq:mlengths}
\sum_{n=1}^m L(f(m)) = \frac{2}{\log g} m \log m (1+o(1)).
\end{equation}
Here the sum of $\log_g(n)$ has been estimated using the proof of the integral test. It is clear from \eqref{eq:mlengths} that $a_n=n^2$ satisfies the restrictions on $L(a_n)$ from the statement of Theorem \ref{thm:combinatorial}.

To prove the normality of $x_B=0.\widebar{f(1)}\widebar{f(2)}\widebar{f(3)}\dots$ in this case, it suffices by Theorem \ref{thm:combinatorial} to show that for a fixed $\epsilon>0$ and $k\in \mathbb{N}$, that the number of $n \in [1,m]$ for which $f(n)$ is not $(2\epsilon,k)$-normal is $o(m)$ as $m$ tends to infinity. (Note that using $2\epsilon$ is intentional here.)

Let $\delta=\delta(\epsilon,k)$ be the constant from Proposition \ref{prop:CE}, and let $m'=\lfloor m^{1-\frac{\delta}{2}} \rfloor$. We may ignore values of $n\in [1,m'-1]$, since there are only $o(m)$ of these.

Let $\ell := \lfloor L(f(m))/2 \rfloor$. For sufficiently large $m$, we have $L(f(n))> \ell$ for all $n \in [m',m]$.

We now consider two new auxiliary functions $b(n,m)$ and $c(n,m)$ for a fixed $m$. We let $b(n,m)=\lfloor n^2/g^{\ell} \rfloor$, and we let $c(n,m)$ be the least nonnegative residue of $n^2$ modulo $g^{\ell}$. While we shall define $\widebar{b(n,m)}$ in the usual way as the string of base $g$ digits of $b(n,m)$, we shall modify our definition slightly for $\widebar{c(n,m)}$. If $c(n,m)$ has fewer than $\ell$ base $g$ digits, then append enough $0$'s to the beginning of the string $\widebar{c(n,m)}$ so that it has length $\ell$.

With these definitions, the string $\widebar{f(n)}$ is the concatenation of $\widebar{b(n,m)}$ and $\widebar{c(n,m)}$, for all  $n\in [m',m]$. As a quick example, consider $m=500$ and $n=179$. Then $f(m)=250000$, so that $l=3$, and $f(n)=32041$. In this case, $b(n,m)=32$ and $c(n,m)=41$, so that $\widebar{b(n,m)}=32$ and $\widebar{c(n,m)} =041$. % Therefore, we do truly have that $\widebar{f(n)}$ is the concatenation of $\widebar{b(n,m)}$ and $\widebar{c(n,m)}$ in this case.

Since $\widebar{f(n)}$ has close to $2\ell$ digits, and both $\widebar{b(n,m)}$ and $\widebar{c(n,m)}$ have approximately $\ell$ digits, we may think of this as bisecting $f(n)$ into halves.

Now we make two claims which we will prove in subsequent sections:
\begin{claim}
The number of $n\in [m',m]$ for which $\widebar{b(n,m)}$ is not $(\epsilon,k)$-normal is $o(m)$ as $m$ tends to infinity. 
\end{claim}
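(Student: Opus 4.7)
The plan is to count bad values of $n$ by applying the first half of Proposition \ref{prop:CE} to the integers $v$ that can arise as $b(n,m)$. Since $\ell = \lfloor L(f(m))/2 \rfloor$, the quantities $g^\ell$ and $m$ are of the same order of magnitude (each is bounded by a constant times the other), so for $n \in [m',m]$ the integer $b(n,m) = \lfloor n^2/g^\ell\rfloor$ lies in $[1, Cm]$ for some constant $C = C(g)$; the lower bound $b(n,m) \ge 1$ comes from $m' \ge \sqrt{g^\ell}$ for large $m$, which holds because the exponent $1 - \delta/2$ exceeds $1/2$. Let $B$ denote the set of integers in $[1, Cm]$ that are not $(\epsilon,k)$-normal; by Proposition \ref{prop:CE}, $|B| = O(m^{1-\delta})$, with the same $\delta$ used in defining $m'$. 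It therefore suffices to bound the number of $n \in [m',m]$ with $b(n,m) \in B$.

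The key quantitative input is a preimage bound: the condition $b(n,m) = v$ confines $n$ to the interval $[\sqrt{vg^\ell}, \sqrt{(v+1)g^\ell})$, whose length equals
\[
\frac{g^\ell}{\sqrt{(v+1)g^\ell}+\sqrt{vg^\ell}} \le \frac{\sqrt{g^\ell}}{2\sqrt{v}}.
\]
Consequently this interval contains at most $O(\sqrt{m/v} + 1)$ integers, so each $v \in B$ is hit by $O(\sqrt{m/v} + 1)$ values of $n$.

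The main obstacle is that summing this preimage bound naively over $v \in B$ yields only $O(\sqrt{m})\cdot |B| = O(m^{3/2-\delta})$, which fails to be $o(m)$ unless $\delta > 1/2$. I would circumvent this with a dyadic decomposition: for each $j \ge 0$ set $B_j = B \cap [Cm/2^{j+1},\ Cm/2^j]$. For $v \in B_j$ the preimage count is $O(2^{j/2})$, while $|B_j|$ admits two bounds, the trivial $|B_j| \le Cm/2^j$ and the Copeland--Erd\H{o}s bound $|B_j| \le |B| = O(m^{1-\delta})$. These bounds coincide at $j^\ast \approx \delta \log_2 m$; using the Copeland--Erd\H{o}s bound for $j \le j^\ast$ and the trivial bound for $j > j^\ast$, the total number of bad $n$ is
\[
O\!\left(m^{1-\delta}\right)\cdot O\!\left(2^{j^\ast/2}\right) \;+\; O\!\left(m / 2^{j^\ast/2}\right) \;=\; O\!\left(m^{1-\delta/2}\right) \;=\; o(m),
\]
as required.
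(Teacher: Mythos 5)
Your proof is correct, and its overall strategy is the same as the paper's: bound the number of bad $n$ by (number of non-$(\epsilon,k)$-normal values $v$ in $[1,gm]$, which is $O(m^{1-\delta})$ by the first half of Proposition \ref{prop:CE}) times (the size of the fiber $b^{-1}(v)$). Where you diverge is in how the fiber sizes are controlled. The paper exploits the restriction $n\ge m'$ together with the monotonicity of $b$: for $m'\le n_1<n_2\le m$ one has $b(n_2)-b(n_1)\gg (n_2-n_1)m'/m$, so every fiber has size at most $m^{3\delta/4}$ uniformly, giving the count $O(m^{1-\delta}\cdot m^{3\delta/4})=O(m^{1-\delta/4})$. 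You instead compute the exact fiber length $O(\sqrt{m/v}+1)$ for each value $v$ and then run a dyadic decomposition in $v$, playing the trivial bound $|B_j|\le Cm/2^j$ against the Copeland--Erd\H{o}s bound; this yields the slightly sharper $O(m^{1-\delta/2})$ and, notably, does not need the lower cutoff $m'$ at the counting stage (you use $m'$ only to guarantee $b(n,m)\ge 1$, i.e., that the bisection is well defined). In fact your fiber bound already gives a uniform estimate on the relevant range: for $n\ge m'$ one has $v\gg m^{1-\delta}$, hence $\sqrt{m/v}\ll m^{\delta/2}$, so you could have skipped the dyadic step entirely and still obtained $O(m^{1-\delta}\cdot m^{\delta/2})=O(m^{1-\delta/2})$. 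Either way the conclusion $o(m)$ follows, and all the intermediate assertions you make (the comparability of $g^\ell$ and $m$, the interval-length computation, and the balancing at $j^\ast\approx\delta\log_2 m$) check out.
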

\begin{claim}
The number of $n \in [m',m]$ for which $\widebar{c(n,m)}$ is not $(\epsilon,k)$-normal is $o(m)$ as $m$ tends to infinity.
\end{claim}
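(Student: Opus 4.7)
The plan is to turn the claim into a residue-counting problem modulo $g^\ell$, and then exploit the sparsity of non-normal strings via Cauchy--Schwarz. Since $c(n,m) = n^2 \bmod g^\ell$ depends only on the residue of $n$ modulo $g^\ell$, and since $\ell = \log_g m + O(1)$ forces $g^\ell \asymp m$, each residue class modulo $g^\ell$ is represented by $O(1)$ values of $n \in [m', m]$. Let $B \subseteq [0, g^\ell)$ be the set of integers $r$ whose padded base-$g$ string of length $\ell$ fails to be $(\epsilon,k)$-normal; by the second half of Proposition~\ref{prop:CE}, $|B| \le g^{\ell(1-\delta')}$ for some $\delta' = \delta'(\epsilon,k)>0$ and all large $\ell$. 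Setting
\[
A := \{a \in [0, g^\ell) : a^2 \bmod g^\ell \in B\},
\]
the count asserted in the claim is at most $O(|A|)$, so it suffices to prove $|A| = o(g^\ell) = o(m)$.

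Introduce $R(r) := \#\{a \in [0, g^\ell) : a^2 \equiv r \pmod{g^\ell}\}$, so that $|A| = \sum_{r \in B} R(r)$. By Cauchy--Schwarz,
\[
|A|^2 \le |B| \cdot \sum_{r \in [0, g^\ell)} R(r)^2 = |B| \cdot \#\{(a_1, a_2) \in [0, g^\ell)^2 : a_1^2 \equiv a_2^2 \pmod{g^\ell}\}.
\]
I would bound the pair-count by rewriting the congruence as $(a_1 - a_2)(a_1 + a_2) \equiv 0 \pmod{g^\ell}$: for each $d = a_1 - a_2 \bmod g^\ell$, the number of admissible $a_2$ is $O(\gcd(d, g^\ell))$, so the pair-count is $O\!\big(\sum_{d \bmod g^\ell} \gcd(d, g^\ell)\big)$. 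A standard multiplicative evaluation, applied prime power by prime power to $N = g^\ell$ via $\sum_{d \bmod p^k} \gcd(d, p^k) = O(k p^k)$, yields $\sum_{d \bmod g^\ell} \gcd(d, g^\ell) = O_g(\ell^C g^\ell)$ for some constant $C = C(g)$. Combining,
\[
|A| \le \sqrt{g^{\ell(1-\delta')} \cdot O_g(\ell^C g^\ell)} = g^\ell \cdot O_g\!\left(g^{-\delta'\ell/2} \, \ell^{C/2}\right) = o(g^\ell),
\]
as desired.

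The main obstacle I anticipate is the second-moment estimate $\sum_r R(r)^2 = O_g(\ell^C g^\ell)$. Residues of high $p$-adic valuation for $p \mid g$ carry many square roots---for instance $R(0)$ is as large as $g^{\ell/2}$---so naive bounds using $\max_r R(r)$ are hopelessly weak, and any attempted proof via the trivial $|A| \le |B|\cdot\max_r R(r)$ collapses immediately. The gcd-parametrization sidesteps a direct prime-by-prime case analysis of the square-root counts and neatly absorbs the heavy-fiber contribution (including $r = 0$ and other $p$-power multiples) into a mild polynomial-in-$\ell$ factor, which is then easily dominated by the exponential savings $g^{-\delta'\ell/2}$ coming from the smallness of $B$. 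Verifying this gcd bound is, in my view, the one technical step that requires real care; everything else in the argument is assembly.
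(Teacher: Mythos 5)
Your argument is correct, and its skeleton is the same as the paper's: reduce to residues modulo $g^\ell$ (using $g^\ell\asymp m$ so each class meets $[m',m]$ in $O(1)$ points), invoke the second half of Proposition \ref{prop:CE} to make $B$ exponentially sparse, and apply Cauchy--Schwarz so that the whole claim rides on the second moment $\sum_r R(r)^2=\#\{(x,y): x^2\equiv y^2 \pmod{g^\ell}\}$. Where you genuinely diverge is in proving the pair-count bound. The paper's Lemma \ref{lem:secondhalf} works one prime power at a time: it strips off the common power of $p$ dividing $x$ and $y$, reduces to counting square roots in the unit group modulo $p^{e-2r}$ (two square roots for odd $p$, at most four for $p=2$), and reassembles via the Chinese Remainder Theorem to get $O((\log m)^{j}m)$ pairs. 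You instead factor the congruence as $(x-y)(x+y)\equiv 0\pmod{g^\ell}$, observe that for fixed $d=x-y$ there are at most $2\gcd(d,g^\ell)$ choices of $y$ (the congruence $2y\equiv -d \pmod{g^\ell/\gcd(d,g^\ell)}$ has at most two solutions per modulus), and evaluate $\sum_{d}\gcd(d,g^\ell)=O_g(\ell^{\,\omega(g)}g^\ell)$ multiplicatively. Both routes land on the same polynomial-in-$\ell$ loss, which is then crushed by the $g^{-\delta'\ell/2}$ saving from $\#B$. Your gcd parametrization is the more elementary of the two: it needs no structure theory of $(\mathbb{Z}/p^e\mathbb{Z})^\times$ and no case split at $p=2$, and it handles the heavy fibers (residues of high $p$-adic valuation, where $R(r)$ can be as large as $g^{\ell/2}$) automatically. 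The paper's lemma, on the other hand, gives an essentially exact count per prime power and is the form that generalizes in the footnote of Section \ref{sec:final} to the last third of $\widebar{n^3}$. One small bookkeeping point: your set $A$ should be compared to $\{n\bmod g^\ell : n\in[m',m]\}$ with multiplicity at most $g+1$, which you implicitly do; this is the same $[m',m]\subset[1,g^{\ell+1}]$ observation the paper makes.
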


We now finish proving the normality of $x_B$ assuming these two claims.

Suppose that both $\widebar{b(n,m)}$ and $\widebar{c(n,m)}$ are $(\epsilon,k)$-normal. Then 
\begin{align*}
\nu(f(n),\s) &= \nu(\widebar{b(n,m)},\s)+\nu(\widebar{c(n,m)},\s)+O(k)\\
&= L(\widebar{b(n,m)})(g^{-k} + O(\epsilon )) + L(\widebar{c(n,m)})(g^{-k}+O(\epsilon ))+O(k)\\
&= L(f(n))(g^{-k}+O(\epsilon))+O(k).
\end{align*}
(Here we are using the big-O notation with implicit constant $1$ in all cases.) Since $k = O(\epsilon L(f(n)))$ for all $n\in [m',m]$ provided $m$ is sufficiently large, we have that $f(n)$ is $(2\epsilon,k)$-normal in this case. So $f(n)$ is not $(2\epsilon,k)$-normal only if $\widebar{b(n,m)}$ or $\widebar{c(n,m)}$ is not $(\epsilon,k)$-normal, and there are only $o(m)$ such $n$ in the interval $[m',m]$ by our two claims. This completes the proof.

%%%%%%%%%%%%%%%%%%%%%%%%
\section{How often is the first half of $n^2$ normal?}\label{sec:firsthalf}
%%%%%%%%%%%%%%%%%%%%%%%%

Here we will prove that the number of $n\in [m',m]$ for which $\widebar{b(n,m)}$ is not $(\epsilon,k)$-normal is $o(m)$ as $m$ tends to infinity. This is comparatively simple. As the first half of the digits of $f(n)$ grow fairly regularly, they cannot take any given non-$(\epsilon,k)$-normal value too frequently.

Recall that $b(n,m)=\lfloor n^2/g^{\ell} \rfloor$. For the remainder of this section we will often suppress the dependence on $m$ and just write $b(n)$.

Since $\ell \ge \log_{g} m - 1$, we have $g^\ell \ge m/g$, and thus $b(n)$ is always in the interval $[1,gm]$. By the first half of Proposition \ref{prop:CE}, there are at most $(gm)^{1-\delta}$ integers in the interval $[1,gm]$ that are not $(\epsilon,k)$-normal.

 Now suppose that $m'\le n_1< n_2 \le m$. Then, since we have $\ell \le \log_g m+1$ as well, 
\begin{align*}
b(n_2)-b(n_1) &= \frac{n_2^2-n_1^2}{g^{\ell}} +O(1) = g^{-\ell} (n_2-n_1)(n_2+n_1) +O(1)\\
& \ge (gm)^{-1} (n_2-n_1)(2m') +O(1) 
\end{align*}
Thus, if $n_2-n _1 \ge m^{3\delta/4}$, then
\[
b(n_2)-b(n_1)\ge (gm)^{-1} \cdot m^{3\delta/4} \cdot 2m' +O(1) \ge 2g^{-1}m^{\delta/4}+O(1).
\]
Thus, for sufficiently large $m$, we have $b(n_1)\neq b(n_2)$ for $n_2-n_1 \ge m^{3\delta/4}$. Since $b(n)$ is non-decreasing, this means that $b(n)$ can take a given value in the interval $[1,gm]$ at most $m^{3\delta/4}$ times. Since there are at most $(gm)^{1-\delta}$ integers in the interval $[1,gm]$ that are not $(\epsilon,k)$-normal, we have that at most $O(m^{1-\delta/4})=o(m)$ of the integers $n\in [m',m]$ have $\widebar{b(n,m)}$ not $(\epsilon,k)$-normal.

%%%%%%%%%%%%%%%%%%%%%%%%%%%%%
\section{How often is the second half of $n^2$ normal?}\label{sec:secondhalf}
%%%%%%%%%%%%%%%%%%%%%%%%%%%%

Here we will prove that the number of $n\in [m',m]$ for which $\widebar{c(n,m)}$ is not $(\epsilon,k)$-normal is $o(m)$ as $m$ tends to infinity. As before, we will write $c(n)$ in place of $c(n,m)$.

Let $B$ be the set of integers $b\in [0,g^{\ell}-1]$ such that $\widebar{b}$ is not $(\epsilon,k)$-normal. Here again, we assume $\widebar{b}$ to be padded with initial zeros so as to have length $\ell$. By the second half of Proposition \ref{prop:CE}, the cardinality of $B$ is at most $g^{\ell(1-\delta')}$ for a certain $\delta'=\delta'(\epsilon,k)>0$.

How often is $c(n)\in B$? Since $[m',m] \subset [1,g^{\ell+1}]$, the number of $n\in [m',m]$ with $c(n) \in B$ is at most the $g$ times the count of such $n$ in $[1,g^{\ell}]$. By Cauchy--Schwarz, 
\begin{align*}
\sum_{\substack{1 \le n \le g^{\ell}\\c(n)=b}} 1 &= \sum_{b\in B} \#\{1\le n \le g^{\ell}\mid n^2 \equiv b \pmod{g^{\ell}} \} \\
&\le\left( \sum_{b\in B} 1\right)^{1/2} \left( \sum_{b\in B} \#\{1\le n_1,n_2 \le g^{\ell}\mid n_1^2 \equiv n_2^2 \equiv b \pmod{g^{\ell}} \}\right)^{1/2}.
\end{align*} 

We are now left with the problem of counting how many pairs of integers $(x,y)$ there are with $1\le x,y\le g^l$ and $x^2\equiv y^2 \pmod{g^\ell}$. When $g$ is a prime power, a satisfactory answer is contained in the next lemma.

\begin{lem}\label{lem:secondhalf}
Let $p$ be a prime and $e$ be a positive integer. The number of solutions to the congruence  $x^2 \equiv y^2 \pmod{p^e}$ is at most
\[
\begin{cases}
2e \cdot p^e, & \text{if }p\text{ is odd,}\\
4e \cdot p^e, & \text{if }p = 2.
\end{cases}
\]
\end{lem}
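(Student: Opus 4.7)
The plan is to reformulate $x^2 \equiv y^2 \pmod{p^e}$ as the divisibility $p^e \mid (x-y)(x+y)$ and then count pairs $(x,y) \in (\mathbb{Z}/p^e\mathbb{Z})^2$ by stratifying according to the $p$-adic valuation of the factors.

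For odd $p$, the linear substitution $(x,y) \mapsto (u,v) := (x-y,\,x+y)$ is a bijection of $(\mathbb{Z}/p^e\mathbb{Z})^2$ with itself, since $2$ is a unit modulo $p^e$. Under it the problem reduces to counting pairs $(u,v)$ with $p^e \mid uv$. I would evaluate this by splitting on $a := v_p(u) \in \{0,1,\dots,e\}$: there are $(p-1)p^{e-a-1}$ residues $u$ with $v_p(u)=a$ when $a<e$ and a single residue ($u \equiv 0$) when $a=e$, while for each such $u$ there are $p^{a}$ admissible $v$. Summing gives
\[
\sum_{a=0}^{e-1}(p-1)p^{e-a-1}\cdot p^a + 1\cdot p^e = (e+1)p^e - e p^{e-1} \le 2e\cdot p^e
\]
for every $e \ge 1$, which settles the odd case.

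For $p=2$ the substitution is no longer bijective (it is 2-to-1 onto the parity-diagonal), so I plan to count more directly. Setting $h := x - y$, the identity $(x-y)(x+y) = h(h+2y)$ shows that the condition forces $h$ to be even. Writing $h = 2^a h'$ with $h'$ odd and $a \ge 1$, the divisibility becomes $2^{e-a-1} \mid 2^{a-1}h' + y$, a linear congruence on $y$ that is vacuous once $a \ge e-1$. A short case split on $a$, together with the separate cases $h$ odd (no solutions) and $h \equiv 0 \pmod{2^e}$ (all $y$ work), yields the exact total $e\cdot 2^e$, comfortably within the stated bound $4e\cdot 2^e$.

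The main obstacle is the $p=2$ case: the failure of $2$ to be a unit modulo $2^e$ blocks the clean bijective reduction available for odd primes and forces the case analysis on $v_2(h)$, with the boundary values $a \ge e-1$ (where the linear congruence on $y$ degenerates) needing separate handling. The extra factor of $2$ between the odd and even bounds in the statement is exactly the slack needed to absorb this degeneration, so no further refinement is required.
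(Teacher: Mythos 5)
Your argument is correct, and both of your exact counts --- $(e+1)p^e - ep^{e-1}$ for odd $p$ and $e\cdot 2^e$ for $p=2$ --- are consistent with the formulas the paper's computation produces, so the stated bounds follow comfortably. Your route, however, is genuinely different from the paper's. The paper first sets aside the pairs in which $p^{\lceil e/2\rceil}$ divides both coordinates, stratifies the remaining solutions by $r = v_p(x)$, reduces to a congruence between units, and then invokes the structure of the unit group modulo a prime power: cyclic for odd $p$, so that every square has exactly two square roots, and a product of at most two cyclic groups for $p=2$, giving at most four square roots. You instead exploit the factorization $x^2 - y^2 = (x-y)(x+y)$: for odd $p$ the invertible substitution $(u,v) = (x-y,\,x+y)$ converts the problem into counting pairs with $p^e \mid uv$, a purely elementary valuation count, and for $p=2$ you compensate for the non-invertibility of $2$ with a direct case split on $v_2(x-y)$, which you carry out correctly (the degenerate range $a \ge e-1$ is handled as you indicate). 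Your version is more elementary --- no primitive roots or unit-group structure needed --- and it delivers exact counts with less bookkeeping. The trade-off is that the difference-of-squares factorization is special to exponent $2$: the paper's unit-group argument is the one that adapts to congruences $x^d \equiv y^d \pmod{p^e}$ for $d>2$ (each element of a cyclic group has at most $d$ $d$-th roots), which is relevant to the discussion of cubes in Section \ref{sec:final}, whereas $x^3 - y^3 = (x-y)(x^2+xy+y^2)$ does not linearize in the same way.
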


\begin{proof} Certainly, if $p^{\lceil e/2\rceil}$ divides each component of the pair $(x,y)$, then $x^2\equiv y^2\pmod{p^e}$. There are $(p^{e-\lceil e/2\rceil})^2$ solutions of this kind. 

For all other solutions, $p^{\lceil e/2\rceil}$ divides neither component. Group the remaining solutions according to the largest exponent $r$ for which $p^r \mid x$. Then $0 \leq r < e/2$. Since $p^{2r}\mid p^e \mid x^2-y^2$ and $p^{2r}$ divides $x^2$, we see that $p^r$ divides $y$. Write $x=p^r x'$ and $y=p^r y'$, and notice that determining the pair $(x,y)$ modulo $p^e$ amounts to determining the pair $(x', y')$ modulo $p^{e-r}$. Now $x^2 \equiv y^2\pmod{p^e}$ precisely when \begin{equation}\label{eq:congsq} x'^2 \equiv y'^2\pmod{p^{e-2r}}.\end{equation}
This final congruence looks very similar to the one we started with, and one might wonder if we have gained anything. Indeed we have: by the maximality of $r$, we know that $x'$ must be coprime to $p$, which forces $y'$ to be coprime to $p$ as well. In other words, $x'$ and $y'$ represent elements of the unit group modulo $p^{e-2r}$. 

The key word here is ``group." As was known to Gauss, if $p$ is an odd prime, the units group modulo $p^{e-2r}$ is cyclic of order $p^{e-2r}(1-1/p)$. In any cyclic group of even order, each element has precisely two square roots. So if $p$ is odd, the congruence \eqref{eq:congsq} has  $2p^{e-2r}(1-1/p)$ solutions modulo $p^{e-2r}$, and so has $p^{2r} \cdot 2p^{e-2r}(1-1/p) = 2p^e (1-1/p)$ solutions modulo $p^{e-r}$. Putting everything together, we see that the number of solutions to $x^2\equiv y^2\pmod{p^e}$ is exactly
\[ 2p^e (1-1/p) \lceil e/2\rceil + (p^{e-\lceil e/2\rceil})^2. \]
Here we used that the number of integers in the range $0 \leq r < e/2$ is precisely $\lceil e/2\rceil$.

What if $p=2$? In this case, the group of units modulo $p^{e-2r}$ is either cyclic or the direct sum of two cyclic groups, and so each element can have at most four square roots. Modifying the above argument accordingly, we find that the number of solutions in this case is at most $4p^e (1-1/p) \lceil e/2\rceil + (p^{e-\lceil e/2\rceil})^2$. 

Finally, it is straightforward to check that these bounds do not exceed the upper bounds specified in the statement of the lemma.
\end{proof}

Now suppose that $g$ has the prime factorization $p_1^{e_1}p_2^{e_2}\dots p_j^{e_j}$. % where $j$ is the number of distinct prime factors of $g$. 
We use the above lemma combined with the Chinese Remainder Theorem to see that the number solutions to $x^2\equiv y^2 \pmod{g^\ell}$ is at most $2(\prod_{i=1}^{j} 2e_i \ell) g^{\ell}$, where the initial $2$ comes from the possibility that $2$ is a factor of $g$.
Since $g^\ell \le gm$ and $\ell \le \log_g m+1$, the total number of pairs is $O((\log m)^j m)$.

Thus, the number of $n \in [m',m]$ for which $c(n)$ belongs to $B$ is is $O(\sqrt{\#B} \cdot (\log{m})^{j/2} m^{1/2})$. Recalling that $\#B \le g^{\ell(1-\delta')}$ while $g^{\ell} \leq gm$, we see that $\#B = O(m^{1-\delta'})$. Since $(\log{m})^{j/2}$ is smaller than $m^{\delta'/4}$ for large $m$, our final count of $n$ is $O(m^{1-\delta'/4})$, which is $o(m)$. This completes the proof.

%%%%%%%%%%%%%%%%%%%%%%
\section{Revisiting and extending Besicovitch}\label{sec:final}
%%%%%%%%%%%%%%%%%%%%%%

As mentioned earlier, the main result of Besicovitch's paper was not a proof that $x_B=0.1491625\cdots$ is normal base $10$. What Besicovitch showed was that, almost all of the integers $n^2$, for $n\in \mathbb{N}$, are $(\epsilon,1)$-normal to a given base $g\ge 2$. 

Besicovitch, like we did above, split the string $\widebar{n^2}$ into approximate halves. His method for showing that the first half has good normality properties is very similar to the method we used. His method for the second half is a long direct counting argument quite different from ours. Besicovitch relies heavily on results from Diophantine approximation about how well real numbers can be approximated by rational numbers with small denominators.

It's natural to ask if we could show combinatorially that  $n^3$ is $(\epsilon,k)$-normal for almost all $n$. We could divide the string $\widebar{n^3}$ into $3$ roughly equal length pieces as we did with $\widebar{n^2}$. The methods of Section \ref{sec:firsthalf} (and those of Besicovitch) would show that the first third of $\widebar{n^3}$ would almost always be $(\epsilon,k)$-normal. The methods of Section \ref{sec:secondhalf}, with minor modifications\footnote{We could count the $n$ for which $p_i^{\lceil \log \ell \rceil} \mid c(n)$ for some $i$ first, and this will be $o(m)$. Then we apply our Cauchy-Schwarz estimate to the remaining $n$, so that in Lemma \ref{lem:secondhalf} we could assume that $p^{\lceil \log \ell \rceil}$ does not divide either $x$ and $y$. The rest of the proof would be mostly unchanged.}, would show the same for the last third of $\widebar{n^3}$. (Besicovitch's methods would not work here due to the limits of Diophantine approximation of real numbers.) However, neither our methods nor Besicovitch's would be sufficient to show the middle third of $\widebar{n^3}$ is almost always $(\epsilon,k)$-normal. New techniques are required for that.

\end{document}